\title{Maximal Symmetric Difference-Free Families of Subsets of $[n]$}
\author {Travis G.~Buck and Anant P.~Godbole\\
Department of Mathematics and Statistics\\
East Tennessee State University}
\begin{document}
\def\qed{\vbox{\hrule\hbox{\vrule\kern3pt\vbox{\kern6pt}\kern3pt\vrule}\hrule}}
\def\ms{\medskip}
\def\n{\noindent}
\def\ep{\varepsilon}
\def\G{\Gamma}
\def\lr{\left(}
\def\lf{\lfloor}
\def\rf{\rfloor}
\def\lc{\left\{}
\def\rc{\right\}}
\def\rr{\right)}
\def\ph{\varphi}
\def\p{\mathbb P}
\def\v{\mathbb V}
\def\nk{n \choose k}
\def\a{\cal A}
\def\e{\mathbb E}
\def\l{\mathbb L}
\newcommand{\bsno}{\bigskip\noindent}
\newcommand{\msno}{\medskip\noindent}
\newcommand{\oM}{M}
\newcommand{\omni}{\omega(k,a)}
\newtheorem{thm}{Theorem}[section]
\newtheorem{con}{Conjecture}[section]
\newtheorem{deff}[thm]{Definition}
\newtheorem{lem}[thm]{Lemma}
\newtheorem{cor}[thm]{Corollary}
\newtheorem{rem}[thm]{Remark}
\newtheorem{prp}[thm]{Proposition}
\newtheorem{ex}[thm]{Example}
\newtheorem{eq}[thm]{equation}
\newtheorem{que}{Problem}[section]
\newtheorem{ques}[thm]{Question}
\providecommand{\floor}[1]{\left\lfloor#1\right\rfloor}
\maketitle
\begin{abstract}
Union-free families of subsets of $[n]=\{1,\ldots n\}$ have been studied in \cite{FF}.  In this paper, we provide a complete characterization of maximal {\it symmetric difference}-free families of subsets of $[n]$.
\end{abstract}
\section{Introduction}
In combinatorics, there are many examples of set systems that obey certain properties, such as (i) Sperner families, i.e.~collections ${\cal A}$ of subsets of $[n]$ such that $A,B\in {\cal A}\Rightarrow A\subset B$ and $B\subset A$ are both false; and (ii) pairwise intersecting families of $k$-sets, i.e.~collections ${\cal A}$ of sets of size $k$ such that $A,B\in {\cal A}\Rightarrow A\cap B\neq\emptyset$.  In these two cases, Sperner's theorem and the Erd\H os-Ko-Rado theorem (see, e.g., \cite{vanlint}) prove that the corresponding extremal families are of size ${n\choose{\lfloor n/2\rfloor}}$ and ${{n-1}\choose{k-1}}$ respectively -- and correspond to the ``obvious" choices, namely {\it all} subsets of size ${\lfloor n/2\rfloor}$, and all subsets of size $k$ containing a fixed element $a$. Another example of set systems is that of   union closed families, i.e.~ensembles ${\cal A}$ such that $A,B\in {\cal A}\Rightarrow A\cup B\in{\cal A}$.  Frankl's conjecture states that in any { union closed} family ${\cal A}$ of subsets of $\{1,2,\ldots , n\}$ there exists an element $a$ that belongs to at least half the sets in the collection \cite{uiuc}.  {\it Union-free} families were defined by Frankl and F\"uredi \cite{FF} to be those families $\a$ where there do not exist $A,B,C,D\in\a$ such that $A \cup B = C \cup D $.  This definition is similar to that of $B_2$ (or {\it Sidon}) subsets $\a$ of integers \cite{obryant}, which are those for which each of the 2-sums $a+b, a\le b, a,b\in\a$ are distinct.

When defining delta-free families, we adopt an approach and a definition slightly different than those used in \cite{FF}; A family of sets ${\cal A}$ is delta free (which we will often write as $\Delta$-free) if for any $A,B\in{\cal A}$, $A \Delta B \not\in{\cal A}$.  Analogously with union closed families, however, we define $\Delta$-closed families $\a$ to be those for which $A,B\in{\a}\Rightarrow A\Delta B\in{\a}$.  (Recall that the symmetric difference $ A\Delta B$, of two sets $A,B$ contains all the elements that are in exactly one of the sets.)    To give two easy examples, the collection of $2^{n-1}$ subsets of $[n]$ that contain the element ``1", and any collection $\a $ of pairwise disjoint sets both form delta-free families, whereas the collection of even-sized subsets of $[n]$ forms a delta-closed system.  The union free families of \cite{FF} are those for which the ``knowledge base" of two people is never equal to the knowledge base of two other people.  Delta free/ Delta closed families, as defined above, are, on the other hand, those for which the ``special skill sets" of two people can never/always be replaced by the skills of a single third person.  Another ``application" of $ \Delta $-free families would be to let each set represent a person and each element represent a language that the person speaks.  If the family is $ \Delta $-free, then there is no person who can translate for two people all the languages those two do not share and yet not understand the languages those two other people do share.

In this paper we focus more on $\Delta$-free ensembles $\a$ than those that are $\Delta$-closed.

\section{Preliminary Results}

\begin{lem}
If a family ${\cal A}$ contains the empty set, ${\cal A}$ cannot be $\Delta $-free.
\end{lem}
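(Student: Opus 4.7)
The plan is to produce two sets $A,B\in{\cal A}$ whose symmetric difference also lies in ${\cal A}$, directly contradicting $\Delta$-freeness.

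The natural choice is to let $B=\emptyset$, which is available by hypothesis, and to let $A$ be any set in ${\cal A}$ (including $A=\emptyset$ itself, if the definition permits $A=B$). The key identity is $A\Delta\emptyset = A$, which is immediate from the definition of symmetric difference: an element lies in $A\Delta\emptyset$ iff it lies in exactly one of $A$ and $\emptyset$, hence iff it lies in $A$. Since $A\in{\cal A}$, this gives $A\Delta B \in{\cal A}$, so ${\cal A}$ fails to be $\Delta$-free.

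There is essentially no obstacle; the only subtlety is whether the definition of $\Delta$-free intends $A$ and $B$ to be distinct. If it does, then I must choose $A\in{\cal A}\setminus\{\emptyset\}$, which requires $|{\cal A}|\ge 2$; this is a harmless implicit assumption since a one-element family is trivially $\Delta$-free in any reading. I would write the proof in a single line, remarking on this convention only if the authors do not clarify it elsewhere.
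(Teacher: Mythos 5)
Your proof is correct and is essentially the paper's own argument: take $B=\emptyset$ and any $A\in{\cal A}$, and use $A\Delta\emptyset=A\in{\cal A}$ to contradict $\Delta$-freeness. The paper additionally notes the equivalent formulation $A\Delta A=\emptyset$, which also settles your side question about whether $A=B$ is permitted (it is), so no extra convention needs to be assumed.
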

\begin{proof}Assume ${\cal A}$ is $ \Delta $-free and contains the empty set.  Since $A \Delta \emptyset = A$ for any $A$ in ${\cal A}$ we reach a contradiction.  Equivalently, if ${\cal A}$ is $ \Delta $-free, then for any $A \in {\cal A}, A \Delta A = \emptyset $ so $\emptyset\not\in{\cal A}$. \end{proof}

\begin{lem}
Any family ${\cal A}$ that only contains sets of odd cardinality (which we will often call ``odd sets") is $ \Delta $-free.  Similarly, the collection $\a$ of all even sets is $\Delta$-closed, as are several smaller families of even sets.
\end{lem}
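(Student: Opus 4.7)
The plan is to reduce both claims to the elementary parity identity
\begin{equation*}
|A\Delta B|=|A|+|B|-2|A\cap B|,
\end{equation*}
which gives $|A\Delta B|\equiv |A|+|B|\pmod 2$. Everything then follows by bookkeeping on the cardinalities of symmetric differences.

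For the first assertion, suppose every member of $\mathcal{A}$ has odd size. Given any $A,B\in\mathcal{A}$, the right-hand side of the above identity is a sum of two odd numbers minus an even number, so $|A\Delta B|$ is even. Since $\mathcal{A}$ contains only odd sets, $A\Delta B\notin\mathcal{A}$, and thus $\mathcal{A}$ is $\Delta$-free. Note the proof only uses that the odd sets form the nontrivial coset of a subgroup of index two in $(\mathbb{Z}/2\mathbb{Z})^n$ under $\Delta$.

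For the second assertion, the same parity identity shows that when $|A|$ and $|B|$ are both even so is $|A\Delta B|$; hence the family of all $2^{n-1}$ even subsets of $[n]$ is $\Delta$-closed. To produce smaller $\Delta$-closed families consisting only of even sets, I would invoke the observation that the power set of $[n]$ under $\Delta$ is an abelian group isomorphic to $(\mathbb{Z}/2\mathbb{Z})^n$, and that any $\Delta$-closed family necessarily contains $\emptyset$ (since $A\Delta A=\emptyset$) and is in fact a subgroup. Concrete examples to cite: $\{\emptyset,A\}$ for any single even set $A$; or the $2^k$-element subgroup generated by any $k$ pairwise disjoint even sets $A_1,\dots,A_k$; both are strictly contained in the collection of all even sets once $n\geq 2$.

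There is no genuine obstacle here, since the argument is a one-line mod-$2$ calculation together with a construction. The only subtlety worth flagging is that $\emptyset$ is of even cardinality and belongs to every nonempty $\Delta$-closed system, which is consistent with (and in fact forced by) the $\Delta$-closed examples above but contrasts with Lemma~1 in the $\Delta$-free setting.
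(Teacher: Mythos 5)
Your proof is correct and follows essentially the same route as the paper: both rest on the identity $|A\Delta B|=|A|+|B|-2|A\cap B|$ and the resulting mod-$2$ bookkeeping. You go slightly further than the paper by exhibiting concrete smaller $\Delta$-closed families of even sets (the paper asserts their existence without examples), and those examples are valid.
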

\begin{proof}Elementary.  Since
\[\vert A\Delta B\vert=\vert A\vert+\vert B\vert-2\vert A\cap B\vert,\]
it follows that the cardinality of the symmetric difference of two sets $A,B$ of the same parity is always even. \end{proof}
This lemma shows that, of the $2^{n}$ subsets of $\{1,\ldots n\}$, a collection consisting of exactly  half of these subsets is $\Delta$-free. Thus a maximal $ \Delta $-free family of subsets of $\{1,\ldots n\}$ has to have at least $2^{n-1}$ members.  In the spirit of the classical results of Sperner and Erd\H os-Ko-Rado which exemplify results that satisfy West's \cite{west} mnemonic ``TONCAS", or ``The Obvious Necessary Condition is Also Sufficient", we will show that   
\[{\cal A}\ {\rm is}\ \Delta-{\rm free}\Rightarrow\vert{\cal A}\vert\le2^{n-1}.\]
(West, in a personal communication, has attributed the acronym to Crispin St. John Alvah Nash-Williams.)
\section{An Upper Bound and the Class of Maximal Families}
A $ \Delta $-free family can easily be seen to contain sets with even cardinality.  For example, for $n=3$, the family $\{1\}, \{1,2\}, \{1,3\}, \{1,2,3\}$ contains sets with both even and odd cardinality. The family is $ \Delta $-free as it does not contain $\{2\}, \{3\},$ or $\{2,3\}$. It is worth noting that this family too has $2^{n-1}$  members. However, since half ($2^{n-1}$) of the subsets of $\{1,\ldots n\}$ are odd, in order to have more than $2^{n-1}$ members in a $\Delta$-free family, we would need to have at least one even set, which leads to our third lemma.
\begin{lem}
A maximal $ \Delta $-free family that contains at least one even set contains exactly half the even sets and exactly half the odd sets. 
\end{lem}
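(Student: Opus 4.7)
The plan is to reduce the lemma to a clean pairing argument on the parity classes of $2^{[n]}$, and then combine the resulting upper bound on $\vert\mathcal{A}\vert$ with the lower bound $\vert\mathcal{A}\vert\ge 2^{n-1}$ guaranteed by maximality.

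First, fix an even set $E\in\mathcal{A}$. By Lemma~2.1 we have $\emptyset\notin\mathcal{A}$, so $E$ is in fact a \emph{non-empty} even subset of $[n]$. Consider the involution $\phi\colon X\mapsto X\Delta E$ on $2^{[n]}$. Because $\vert E\vert$ is even, $\phi$ preserves the parity of $\vert X\vert$, and because $E\neq\emptyset$, $\phi$ has no fixed points. Hence $\phi$ partitions the $2^{n-1}$ even subsets of $[n]$ into $2^{n-2}$ unordered pairs $\{F,F\Delta E\}$, and analogously partitions the $2^{n-1}$ odd subsets into $2^{n-2}$ unordered pairs.

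The core observation is that at most one element of each such pair $\{X,X\Delta E\}$ can belong to $\mathcal{A}$. Indeed, if both $X$ and $X\Delta E$ were in $\mathcal{A}$, then their symmetric difference $X\Delta(X\Delta E)=E$ would again lie in $\mathcal{A}$, contradicting $\Delta$-freeness. Summing this ``at most one per pair'' bound over the $2^{n-2}$ even pairs and the $2^{n-2}$ odd pairs gives
\[\vert\mathcal{A}\cap\{X\subseteq[n]:\vert X\vert\text{ even}\}\vert\le 2^{n-2},\qquad \vert\mathcal{A}\cap\{X\subseteq[n]:\vert X\vert\text{ odd}\}\vert\le 2^{n-2},\]
and therefore $\vert\mathcal{A}\vert\le 2^{n-1}$.

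To finish, Lemma~2.2 and the discussion immediately following it have already established that any maximal $\Delta$-free family must contain at least $2^{n-1}$ members, since the collection of all odd subsets of $[n]$ is $\Delta$-free and achieves this size. Combining this lower bound with the upper bound produced by the pairing argument forces $\vert\mathcal{A}\vert=2^{n-1}$, and consequently both parity bounds must be saturated: $\mathcal{A}$ contains exactly $2^{n-2}$ even sets and exactly $2^{n-2}$ odd sets, as claimed. The only genuinely delicate point is to apply the maximality hypothesis in its ``largest achievable cardinality'' sense when invoking $\vert\mathcal{A}\vert\ge 2^{n-1}$; the pairing step that supplies the matching upper bound uses nothing beyond $\Delta$-freeness together with the presence of a single even set.
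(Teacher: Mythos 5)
Your proof is correct and follows essentially the same route as the paper: the paper also pairs each set $X$ with $X\Delta F_e$ (using the injectivity remark $A\Delta B=A\Delta C\Rightarrow B=C$ in place of your explicit fixed-point-free involution) to bound each parity class by $2^{n-2}$, and then invokes the lower bound $2^{n-1}$ from maximality. Your phrasing via the involution $\phi$ is a cleaner formalization of the same argument, and the one point the paper handles slightly differently --- excluding $\emptyset$ when pairing even sets --- you cover by citing Lemma 2.1.
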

\begin{proof}Let ${\cal A}$ be a maximal $ \Delta $-free family on $\{1,\ldots n\}$ that contains at least one even set, say $F_{e}$. Since the family consisting of all even subsets cannot be delta-free, and since the cardinality of ${\cal A}$ is at least $2^{n-1}$, we let $O$ be any of the $2^{n-1}$ odd subsets of $[n]$ and consider the set $F_{e} \Delta O = O_{F_e}$, which is an odd set that is distinct from $O$. If $O \in \a,$ then $O_{F_e} \notin \a$ since $\a$ is $ \Delta $-free. Now $A\Delta B=A\Delta C\Rightarrow B=C$.  So, if $\a$ contains an even set, each odd set in $\a$ rules out another odd set as a potential member of $\a$. Therefore, if $\a$ contains an even set, it can contain at most half, or $2^{n-2}$, of the odd subsets of $[n]$. 

Similarly, let $E$ be any of the $2^{n-1}$ even subsets of $[n]$. If $E \in \a$, then we know that $E \neq \emptyset$. If $E = F_{e}$ then $E \Delta F_{e} = \emptyset \notin F$, so we may assume that $E$ is different from $F_{e}$.  $E \Delta F_{e}$ is even so each even set in $\a$ rules out another even set as a potential member of $\a$. So, again, if $\a$ contains an even set, it can contain at most half, or $2^{n-2}$, of the even sets, including the original even set.  Since a $ \Delta $-free family on $\{1,\ldots n\}$ that contains at least one even set can have at most $2^{n-2}$ of the odd sets and at most $2^{n-2}$ of the even sets, it can contain at most $2^{n-1}$ sets.  Since a maximal family has at least $2^{n-1}$ sets, the result follows. \end{proof}
We now know that a maximal $ \Delta $-free family consists of either all the odd sets or half the even sets and half the odd sets. In the latter case, we may ask ``which half of the even sets?" and ``which half of the odd sets?". 

An analysis of the $n=3$ case yielded the following $ \Delta $-free families:
$$\{\{1\}, \{2\}, \{3\}, \{1,2,3\}\},$$
which is the family with all the odd subsets;
$$\{\{1\}, \{1,2\}, \{1,3\}, \{1,2,3\}\};$$
two other families isomorphic to it; and three families isomorphic to
$$\{\{1\}, \{2\}, \{1,3\}, \{2,3\}\}.$$

For $n=4$, the non-isomorphic $ \Delta $-free families are as follows:

$$\{\{1\},\{2\},\{3\},\{4\},\{1,2,3\},\ \{1,2,4\},\{1,3,4\},\{2,3,4\}\},$$
$$\{\{1\},\{1,2\}, \{1,3\}, \{1,4\}, \{1,2,3\},\{1,2,4\}, \{1,3,4\},\{1,2,3,4\}\},$$
$$\{\{1\},\{2\},\{1,3\},\{1,4\},\{2,3\}, \{2,4\},\{1,3,4\},\  \{2,3,4\}\},$$
and 
$$\{\{1\},\{2\},\{3\},\{1,4\},\{2,4\},\{3,4\},\{1,2,3\},\{1,2,3,4\}\};$$
the above have zero, three, five, and three other isomorphic families respectively.  The $n=3$ and $n=4$ cases illustrate a general fact:  Let $S$ be the set of singletons that are to be in $\a$, so $S^{C}$ is the set of elements not represented by singletons in the family $\a$. We can construct the full ensemble $\a$ by adding to the singletons all of the odd sets with an even intersection with $S^{C}$ (which includes the singletons) and all of the even sets with an odd intersection with $S^{C}$. Consider the following example for $n=5$. If we know that $\{1\}$ and $\{2\}$ are in $\a$ but $\{3\}, \{4\},$ and $\{5\}$ are not, then $S^{C}$ is $\{3,4,5\}$.  Based on this, the one-element sets in $\a$ are $\{1\}$ and $\{2\}$ since they have an even, albeit empty, intersection with $S^{C}$.  The two-element sets are $\{1,3\}, \{1,4\},\{1,5\},\{2,3\},\{2,4\},$ and $\{2,5\}$, while the three-element sets are $\{1,3,4\},\{1,3,5\},\{1,4,5\},\{2,3,4\},\{2,4,5\}$, and $\{2,3,5\}$. The four-element sets are $\{1,3,4,5\}$ and $\{2,3,4,5\}$ and there are no five-element sets in $\a$ in this case. This particular family is $ \Delta $-free, but the question arises whether this method of construction works in general. Theorem 3.2 provides an affirmative answer.
\begin{thm}
Given a proper subset $S^{C}$ of $\{1,\ldots n\}$, we can construct a maximal $ \Delta $-free family $\a$ on $\{1,\ldots n\}$ by taking all the odd subsets of $[n]$ having an even intersection with $S^{C}$ and the even subsets of $[n]$ having an odd intersection with $S^{C}$.
\end{thm}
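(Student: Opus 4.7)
The plan is to reformulate the construction in a single parity condition and then verify both $\Delta$-freeness and the size bound directly from that condition.  Write $S = [n] \setminus S^C$.  I claim that a set $A \subseteq [n]$ belongs to $\a$ if and only if $|A \cap S|$ is odd.  Indeed, $|A \cap S| = |A| - |A \cap S^C|$, so $|A \cap S|$ is odd precisely when $|A|$ and $|A \cap S^C|$ have opposite parities, which is exactly the two cases in the statement (odd $|A|$ with even $|A\cap S^C|$, or even $|A|$ with odd $|A \cap S^C|$).  Note that since $S^C$ is a \emph{proper} subset, $S$ is nonempty, a fact I will use for the counting step.

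Given the reformulation, $\Delta$-freeness is an easy parity computation.  For any $A, B \in \a$, observe that $(A \Delta B) \cap S = (A \cap S) \Delta (B \cap S)$, and by the identity $|X \Delta Y| = |X| + |Y| - 2|X \cap Y|$ used in Lemma 2.2, we have
\[
|(A \Delta B) \cap S| = |A \cap S| + |B \cap S| - 2|A \cap B \cap S|.
\]
Since both $|A \cap S|$ and $|B \cap S|$ are odd, the right-hand side is even, so $A \Delta B \notin \a$.  Thus $\a$ is $\Delta$-free.

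For the size, I partition every subset $A \subseteq [n]$ as $A = T \cup U$ with $T \subseteq S$ and $U \subseteq S^C$; then $A \in \a$ iff $|T|$ is odd.  Because $|S| \geq 1$, exactly $2^{|S|-1}$ subsets of $S$ are odd, and $U$ ranges freely over $2^{|S^C|}$ subsets, giving $|\a| = 2^{|S|-1} \cdot 2^{|S^C|} = 2^{n-1}$.

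To conclude maximality, I appeal to the bound established in the paper: any $\Delta$-free family has at most $2^{n-1}$ elements (half the odd sets are already $2^{n-1}$, and Lemma 3.1 shows that the presence of any even set forces the total to be at most $2^{n-1}$).  Since $\a$ is $\Delta$-free and meets this bound, no set can be added without destroying $\Delta$-freeness, so $\a$ is maximal.  The only step that requires care is the initial reformulation; once the condition is phrased as ``$|A \cap S|$ is odd'', both properties drop out mechanically, so I do not expect a genuine obstacle.
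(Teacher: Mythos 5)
Your proposal is correct, and it reorganizes the argument in a way that is genuinely cleaner than the paper's. The paper verifies $\Delta$-freeness by a three-case analysis on the parities of $|A|$ and $|B|$, tracking in each case the parity of $|(A\Delta B)\cap S^C|$; you instead observe that the two defining conditions collapse into the single invariant ``$|A\cap S|$ is odd'' (since $|A\cap S|=|A|-|A\cap S^C|$), after which $\Delta$-freeness is one line: $(A\Delta B)\cap S=(A\cap S)\Delta(B\cap S)$ has even cardinality whenever $|A\cap S|$ and $|B\cap S|$ are both odd. In effect you are exhibiting ${\a}$ as the affine hyperplane $\{v:\langle v,\chi_S\rangle=1\}$ over $GF(2)$, whose difference set lies in the complementary hyperplane. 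Your counting via the product decomposition $A=T\cup U$, $T\subseteq S$, $U\subseteq S^C$, is also more careful than the paper's bare assertion that $2^{n-2}$ odd sets have even intersection with $S^C$ and $2^{n-2}$ even sets have odd intersection --- indeed that assertion fails when $S^C=\emptyset$ (all $2^{n-1}$ odd sets then qualify and no even sets do), whereas your count $2^{|S|-1}\cdot 2^{|S^C|}=2^{n-1}$ is uniformly valid for every proper $S^C$, using exactly the hypothesis $S\neq\emptyset$. Both proofs conclude maximality the same way, by citing the upper bound $2^{n-1}$ established earlier in the paper. The only cosmetic slip is your parenthetical ``half the odd sets are already $2^{n-1}$''; you mean that the odd sets alone number $2^{n-1}$, and that Lemma 3.3's argument caps any $\Delta$-free family containing an even set at $2^{n-2}+2^{n-2}$.
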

\begin{proof}Let $\a$ be a family on $\{1,\ldots n\}$ that consists of the odd members of ${\cal P}([n])$ whose intersection with a subset $S^{C}$ of $\{1,\ldots n\}$ is even, together with the even subsets of $[n]$ whose intersection with $S^{C}$ is odd.  Let $A, B\in\a$.\\ 
\textit{Case 1: $A$ and $B$ are both odd}\\
If $A$ and $B$ are both odd, then their symmetric difference would be even. $A$ and $B$ each have an even intersection with $S^{C}$, so the symmetric difference of their intersections with $S^{C}$ would be even, as well as a subset of $A \Delta B$. Thus, $A \Delta B$ would be an even member of the power set with an even intersection with $S^{C}$.  Hence $A \Delta B \notin \a$.\\ 
\textit{Case 2: $A$ and $B$ are both even}\\
If $A$ and $B$ are both even, then their symmetric difference would be even. $A$ and $B$ each have an odd intersection with $S^{C}$, so the symmetric difference of their intersections with $S^{C}$ would be even. Since $\a$ does not contain even subsets with an even intersection with $S^{C}$ we know that $A \Delta B \notin \a$.\\ 
\textit{Case 3: $A$ is even and $B$ is odd}\\
Since $A$ is even and in $\a$ it has an odd intersection with $S^{C}$. Since $B$ is odd and in $\a$ it has an even intersection with $S^{C}$. Also, $A \Delta B$ is odd. The symmetric difference of the intersection of $A$ with $S^{C}$ and the intersection of $B$ with $S^{C}$ is a subset of $A \Delta B$ and is odd. Therefore, $A \Delta B$ is an odd subset of $[n]$ with an odd intersection with $S^{C}$. It follows that $A \Delta B \notin \a$.
 
In any case, given $A, B\in {\a}, A \Delta B \notin \a$ so $\a$ is $ \Delta $-free.  Of the $2^{n-1}$ odd subsets of $[n]$, $2^{n-2}$ have an even intersection with $S^{C}$ and, of the $2^{n-1}$ even subsets, $2^{n-2}$ have an odd intersection with $S^{C}$. Combined, our family is of size $2^{n-2} + 2^{n-2} = 2^{n-1}$, so we have constructed a family that is $ \Delta $-free and is of maximal size. 
\end{proof}
Note that the family ${\a}={\a}(S^C)$ in Theorem 3.2 contains singletons sets, namely all one point subsets of $S$.  Note also that the complement of the family ${\a}(S^C)$ is $\Delta$-closed.

From the proof of Theorem 3.2, we know that our method will yield a $ \Delta $-free family of maximal size.  A family consisting of all the odd members of the power set can be constructed by letting $S^{C}$ be the empty set.  Since every odd set has a trivially even, empty intersection with $S^{C}$, they can all be included in $\a$.  Since every even set also has a trivially even, empty intersection with $S^{C}$, no such set can be included.  If we now let $S=\{1\}$ it is easy to see that we end up with the ``Erd\H os-Ko-Rado" $\Delta$-free family of all subsets of $[n]$ that contain the point 1.  Note that the family of all supersets of $\{1,2\}$ is also $\Delta$-free but not maximal.   Of course, we cannot let $S=\emptyset$ in Theorem 3.2 since all the odd/even subsets would have an odd/even intersection with $S^C$, and thus we would have ${\a}=\emptyset$, which is not a maximal family.  Thus our process yields $2^n-1$ maximal delta-free families, as evidenced above for $n=3,4$.  For a given $S^C$ we will denote the associated $\Delta$-free family by ${\a}(S^C)$, and call it the $\Delta$-free family {\it generated} by the set $S^C$.

We know the method outlined in Theorem 3.2 will construct maximal $ \Delta $-free families, but in order to completely characterize these, we need to make sure that the ensembles ${\a}(S^C)$ uncovered by Theorem 3.2 are the only maximal $ \Delta $-free families.  Let us first explore a cognate issue.  Given any collection ${\a}=\{A_1,A_2,\ldots,A_N\}$ of $\Delta$-free subsets of $[n]$ and any set $T$, we can partition the sets in $\a$ into four categories, which we can call ${\a}_{o,e}, {\a}_{e,o}, {\a}_{o,o}, {\a}_{e,e}$, where, e.g., ${\a}_{o,e}$ consist of those elements of $\a$ that are odd sets having an even intersection with $T$.  For specificity, note that the $\Delta$-free family of subsets of $[4]$ given by 
$$\{\{1\},\{2\},\{1,3\},\{1,4\},\{2,3\}, \{2,4\},\{1,3,4\},\  \{2,3,4\}\}$$
{\it does} correspond to a maximal family ${\a}(S^C)$ with $S^C=\{3\}$, but it {\it also} could be viewed as a collection of sets with specific intersection patterns with {\it another} set such as $T=\{1,2,3\}$ -- in which case we would have
$${\a}_{o,o}=\{\{1\},\{2\}\},$$
$${\a}_{e,o}=\{\{1,4\},\{2,4\}\},$$
$${\a}_{o,e}=\{\{1,3,4\},\{2,3,4\}\},$$
and
$${\a}_{e,e}=\{\{1,3\},\{2,3\}\}.$$
In fact, we next show that in any maximal $\Delta$-free family and for any set $T\ne S^C$, $$\vert {\a}_{o,e}\vert=\vert {\a}_{e,o}\vert=\vert {\a}_{o,o}\vert=\vert {\a}_{e,e}\vert=2^{n-3}.$$
Assume that the family has at least one even set, and thus  that there are $2^{n-2}$ even and $2^{n-2}$ odd sets.  Let
$$\vert {\a}_{e,o}\vert=2^{n-2}-x:=u; \vert {\a}_{e,e}\vert=x$$
$$\vert {\a}_{o,e}\vert=2^{n-2}-y:=v; \vert {\a}_{o,o}\vert=y,$$
where at least one of $x,y$ is positive, and at least one of $u,v,x,y$ is larger than $2^{n-3}$.  The following diagram represents where the symmetric difference of two sets, each in one of the classes ${\a}_{o,e}, {\a}_{e,o}, {\a}_{o,o}, {\a}_{e,e}$, lies:

$$\begin{matrix}\ &{\a}_{o,o}&{\a}_{o,e}&{\a}_{e,o}&{\a}_{e,e}\cr
{\a}_{o,o}&{\a}_{e,e}&{\a}_{e,o}&{\a}_{o,e}&{\a}_{o,o}\cr
{\a}_{o,e}&{\a}_{e,o}&{\a}_{e,e}&{\a}_{o,o}&{\a}_{o,e}\cr
{\a}_{e,o}&{\a}_{o,e}&{\a}_{o,o}&{\a}_{e,e}&{\a}_{e,o}\cr
{\a}_{e,e}&{\a}_{o,o}&{\a}_{o,e}&{\a}_{e,o}&{\a}_{e,e}\end{matrix}
$$
For example, if $A\in{\a}_{o,e}$ and $B\in{\a}_{e,o},$ then $A\Delta B\in{\a}_{o,o}$.
So, if, without loss of generality, $x\ge 1 $ and $u>2^{n-3}$, there is a surfeit of sets in the class ${\a}_{e,o}$.  But each set in ${\a}_{e,e}$, eliminates one of the sets in ${\a}_{e,o}$, when it is ``symmetrically differenced" with a set in class ${\a}_{e,o}$.  Hence it is impossible for there to be more than $2^{n-3}$ sets in the class ${\a}_{e,o}$.  This proves the claim.  We are now ready to state our main result.
\begin{thm}
The procedure outlined in Theorem 3.2 generates all the possible maximal $ \Delta $-free families on $\{1,\ldots n\}$.  In other words, each $\Delta$-free $\a$ equals ${\a}(S^C)$ for some $S^C$ (in which, by definition, ${\a}_{o,o}={\a}_{e,e}=\emptyset$).
\end{thm}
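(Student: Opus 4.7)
The plan is to show that the complement $\mathcal{B} := \mathcal{P}([n]) \setminus \a$ of a maximal $\Delta$-free family $\a$ is an index-$2$ subgroup of $(\mathcal{P}([n]), \Delta)$, equivalently a hyperplane in the $\mathbb{F}_2$-vector space $\mathbb{F}_2^n$. Once this is done, the classification of such subgroups as kernels of nonzero linear functionals $\phi_S(A) := |A \cap S| \bmod 2$ (with $S \subseteq [n]$ nonempty) immediately yields $\a = \a(S^C)$ with $S^C := [n] \setminus S$.

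For the setup, $\emptyset \in \mathcal{B}$ by Lemma 2.1, and $|\a| = |\mathcal{B}| = 2^{n-1}$ by Lemma 2.3 together with the trivial case that the all-odd-subsets family is itself maximal of size $2^{n-1}$. The core step is to deduce the closure of $\mathcal{B}$ under $\Delta$. Fix any $A \in \a$: the translate $A \Delta \a := \{A \Delta B : B \in \a\}$ consists of $2^{n-1}$ distinct sets and lies in $\mathcal{B}$ by $\Delta$-freeness, so $A \Delta \a = \mathcal{B}$. Unpacking, every $C \in \mathcal{B}$ has the form $A \Delta B$ for some $B \in \a$, so $A \Delta C = B \in \a$; hence $C \Delta \a \subseteq \a$ for every $C \in \mathcal{B}$, and a second counting argument forces $C \Delta \a = \a$. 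Applying the involution $X \mapsto C \Delta X$ to the partition $\mathcal{P}([n]) = \a \sqcup \mathcal{B}$ now gives $C \Delta \mathcal{B} = \mathcal{B}$, which is the desired closure.

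Having shown that $\mathcal{B}$ is a subgroup of size $2^{n-1}$, it is the kernel of a unique nonzero linear functional $\phi_S$ on $\mathbb{F}_2^n$, so $\a = \phi_S^{-1}(1)$; equivalently, $\a$ consists precisely of those $A \subseteq [n]$ with $|A \cap S|$ odd. The identity $|A \cap S| + |A \cap S^C| = |A|$ then rewrites this as the odd subsets whose intersection with $S^C$ is even together with the even subsets whose intersection with $S^C$ is odd, i.e., $\a = \a(S^C)$. The main obstacle is the closure step, which turns on using the cardinality equality $|\a| = |\mathcal{B}| = 2^{n-1}$ to upgrade successive containments to equalities via the bijectivity of translation by a fixed set; the identification of the resulting hyperplane $\mathcal{B}$ with $\phi_S^{-1}(0)$ and of $\a$ with $\a(S^C)$ is then a direct application of $\mathbb{F}_2$-linear algebra.
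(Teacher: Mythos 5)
Your proof is correct, and it takes a genuinely different route from the paper. The paper argues combinatorially: it first shows (by induction on the size of even sets) that a maximal family must contain singletons, defines $S^C$ as the complement of the set of singletons appearing in $\a$, and then rules out, by a case analysis and a second induction, any even member with even intersection with $S^C$ and any odd member with odd intersection with $S^C$. You instead exploit the group structure of $(\mathcal{P}([n]),\Delta)\cong\mathbb{F}_2^n$: using $|\a|=|\mathcal{B}|=2^{n-1}$ (which does follow from Lemma 3.1 together with the all-odd case), the translation-and-counting argument correctly upgrades $A\Delta\a\subseteq\mathcal{B}$ to $A\Delta\a=\mathcal{B}$, then to $C\Delta\a=\a$ and $C\Delta\mathcal{B}=\mathcal{B}$ for $C\in\mathcal{B}$, so the complement is an index-$2$ subgroup, i.e.\ the kernel of a unique nonzero functional $\phi_S$; the identity $|A\cap S|+|A\cap S^C|=|A|$ then converts $\a=\phi_S^{-1}(1)$ into the paper's description $\a=\a(S^C)$. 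Your approach is shorter, avoids both inductions, and exposes the underlying structure (maximal $\Delta$-free families are exactly the affine hyperplanes missing the origin, and their complements are the $\Delta$-closed hyperplanes, which also explains the paper's remark after Theorem 3.2 and its four-class counting computation). The paper's argument, by contrast, is elementary and self-contained within the set-system language and identifies $S$ concretely as the set of elements occurring as singletons in $\a$, which your proof recovers only implicitly.
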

\begin{proof}
We start by showing that every maximal family must contain singleton subsets of $[n]$.  Let's assume that a maximal family contains no singletons; since it doesn't contain only the odds, it must contain half the evens.  If it contains a 2-element set, say $\{1,2\}$, we would normally have to choose between $\{1\}$ and $\{2\}$ in constructing a maximal set, just based on how evens exclude half the odds.  We can choose neither, though, since both are singletons -- leaving us with a maximum of $2^{n-1}  -1$ total possibilities, so if we don't have any singletons, we cannot have any two-element sets in a maximal family.  The next possibility for an even set is a 4-element set, but if we have $\{1,2,3,4\}$ in the family, we'd normally have to choose between, say, $\{1,2\}$ and $\{3,4\}$.  Since, however, we just showed that there cannot be any any two element sets in a maximal family if we don't have singletons, neither of these sets can be in the family.  Likewise, any 6-element set would normally have us picking between a 2- and 4-element set to make a maximal set, neither of which we can have in a maximal set.  The claim follows by induction.

Let $S^C$ denote the set $[n]\setminus\{j:\{j\}\in{\a}\}$.  

\noindent \textit{Case 1: $\a$ contains an even set with an even intersection with $S^{C}$}. 
For a two element set to have an even intersection with $S^{C}$ it must contain either no elements or two elements from $S^{C}$. Let the proposed two element set be $\{1,2\}$. If $\{1,2\} \cap S^{C} = \emptyset$ then $\{1\} \in S$ so, by definition, $\{1\} \in {\a}$.  Similarly, $\{2\} \in {\a}$. However, $\{1\} \Delta \{2\} = \{1,2\}$, so if $\{1,2\} \cap S^{C} = \emptyset$, then $\{1,2\} \notin {\a}$, a contradiction to our assumption that $\{1,2\} \in {\a}$. If $\{1,2\}$ contains exactly two elements from $S^{C}$, then $\{1\} \notin {\a}$ and $\{2\} \notin {\a}$ since $\{1,2\}\subset S^{C}$.  However, in order to be maximal, if $\{1,2\} \in {\a}$, then exactly one of $\{1\}$ and $\{2\}$ must be in ${\a}$ as well, which is a contradiction. So, if $\a$ contains a two element set with an even intersection with the corresponding $S^{C}$, it cannot be maximal.

We next verify the details for four element sets:  If $\{1,2,3,4\}$ has an even intersection with $S^{C}$, then there are three possibilities, which, without loss of generality, are (i) $\{1\},\{2\},\{3\},\{4\}\in{\a}$; (ii) $\{1\},\{2\}\in{\a},\{3,4\}\subset S^C$; and (iii) $\{1,2,3,4\}\subset S^C$.  Now if $\{1,2,3,4\}$ belongs to $\a$ then exactly one of $\{1,2\}$ and $\{3,4\}$ must belong to $\a$ too.  But under each of the above three scenarios, both $\{1,2\}$ and $\{3,4\}$ are even two-element sets with an even intersection with $S^C$, and so $\a$ cannot be maximal.

From here, we can use induction.  We know we cannot have a two-element set with an even intersection with $S^{C}$ in a maximal $ \Delta $-free family. For $k\ge 2$, suppose we cannot have an even set of size $2k$ or less, having an even intersection with $S^{C}$, in a maximal $ \Delta $-free family. A $2(k+1)$-element set with an even intersection with $S^{C}$may split up in two ways:  Either we have all of its elements $\{a_1,\ldots,a_{2k+2}\}$ in $\a$, or $\{a_1,\ldots,a_{2k+2}\}\subset S^C$, or there are two non-empty even sets $A, B$, $\vert A\cup B\vert=2k+2$ such that the elements of $A$ are in $\a$, and $B\subset S^C$.  In the first or second case, we see that either $\{a_1,a_2\}$ or $\{a_3,\ldots,a_{2k+2}\}$ must be in $\a$, and in the second case, either $A$ or $B$ must be in $\a$.  Under any scenario, we contradict maximality, since each of the partitioned sets in question are even, of size $\le 2k$,  and have an even intersection with $S^C$.

\noindent \textit{Case 2: $\a$ contains an odd set with an odd intersection with $S^{C}$.}
We now know that in a maximal $ \Delta $-free family the only even sets possible are those with an odd intersection with $S^{C}$ and that there are $2^{n-2}$ such sets. The symmetric difference of an even set with an odd intersection with $S^{C}$ and an odd set with an odd intersection with $S^{C}$  is an odd set with an even intersection with $S^{C}$. Therefore, if the family under consideration contains all the even sets with an odd intersection with $S^{C}$ and an odd set with an odd intersection with $S^{C}$, it cannot contain an odd set with an even intersection with $S^{C}$ including the singletons we used to define $S$ (and thus $S^{C}$), which leads to a contradiction.  Therefore, if a family contains an odd set with an odd intersection with the corresponding $S^{C}$, it cannot be a maximal $ \Delta $-free family. 
\end{proof}

\section{Open Questions}  How would our main results have changed had we used the ``traditional" definition of $\Delta$-free families $\a$ as those for which there do not exist sets $A,B,C,D\in\a$ with $A\Delta B=C\Delta D$?  If we draw sets from ${\cal P}([n])$ independently and at random with probability $p$, what is the threshold probability $p_0$ for the disappearance  of the $\Delta$-free property?  What closure or avoidance properties can be generalized to three or more sets, where the relevant conditions might involve arithmetic modulo three?
\section{Acknowledgments} The research of AG was supported by NSF Grants 0552730 and 1004624.

\end{document}